\author{Maya Saran}
\subjclass[2000]{03E15, 28A05, 54H05}
\keywords{descriptive set theory, ideals of compact sets}
\address{Department of Mathematics\\
1409 W. Green St.\\
University of Illinois\\
Urbana, IL 61801, USA}
\email{msaran@math.uiuc.edu}
\title{A note on $\gd$ ideals of compact sets}
\newtheorem{thm}{Theorem}
\newtheorem{cor}[thm]{Corollary}
\newcommand{\F}{\mathcal{F}}
\newcommand{\D}{\mathcal{D}}
\newcommand{\el}{\mathcal{L}}
\newcommand{\U}{\mathcal{U}}
\newcommand{\V}{\mathcal{V}}
\newcommand{\K}{\mathcal{K}}
\newcommand{\gd}{G_{\delta}}
\newcommand{\ol}[1]{\overline{#1}}
\newcommand{\bbn}{\mathbb{N}}
\begin{document}

\begin{abstract}
Solecki has shown that a broad natural class of $\gd$ ideals of compact sets can be represented through the ideal of nowhere dense subsets of a closed subset of the hyperspace of compact sets. In this note we show that the closed subset in this representation can be taken to be closed upwards. 
\end{abstract}

\maketitle

Let $E$ be a compact Polish space and let $\K(E)$ denote the hyperspace of its compact subsets, equipped with the Vietoris topology. A set $I \subseteq \K(E)$ is an  \emph{ideal} of compact sets if it is closed under the operations of taking subsets and finite unions. An ideal $I$ is a \emph{$\sigma$-ideal} if it is also closed under countable unions whenever the union itself is compact. Ideals of compact sets arise commonly in analysis out of various notions of smallness; see \cite{MZ} for a survey of results and applications.

After  \cite{SS1} we say that an ideal $I$ has \emph{property $(*)$}  if, for any sequence of sets $K_n \in I$, there exists a $\gd$ set $G$ such that $\bigcup_n K_n \subseteq G$ and $\K(G) \subseteq I$. Property $(*)$ holds in a broad class of $\gd$ ideals that includes all natural examples, including the ideals of compact meager sets, measure-zero sets, sets of dimension $\leq n$ for fixed $n \in \bbn$, and Z-sets. (See \cite{SS1} for these and other examples and a discussion of property $(*)$.) Solecki has shown in \cite{SS1} that ideals in this class are represented via the meager ideal of a closed subset of $\K(E)$. The following definition is essential to the representation: for $A \subseteq E$, $A^*= \{K \in \K (E) : K \cap A \neq \emptyset \}$.

\begin{thm}[Solecki]\label{thm:sschar}
Suppose $I$ is coanalytic and non-empty. Then $I$ has property $(*)$ iff there exists a closed set $\F \subseteq \K(E)$ such that, for any $K \in \K(E)$, \[K \in I \iff K^* \cap \F \textrm{ is meager in } \F.\]
\end{thm}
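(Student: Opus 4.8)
The plan is to prove the two implications separately; the backward one is short and the forward one is the substantial part.

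\emph{Backward direction.} Suppose a closed $\F\subseteq\K(E)$ is given with $K\in I\iff K^*\cap\F$ meager in $\F$. (This forces $I$ to be an ideal, consistently with the hypothesis, since $K\mapsto K^*$ is monotone, $(K\cup L)^*=K^*\cup L^*$, and meagerness is finitely additive.) To verify $(*)$ I would fix $K_n\in I$ and set $A=\bigcup_nK_n$, so $A^*=\bigcup_nK_n^*$. Each $K_n$ is compact, so $K_n^*$ is closed in $\K(E)$; hence $K_n^*\cap\F$ is closed and meager, so nowhere dense, in the Polish space $\F$, and $\F\setminus K_n^*$ is dense open. By the Baire category theorem $\F\setminus A^*=\bigcap_n(\F\setminus K_n^*)$ is a dense $G_\delta$ in $\F$, so I may fix a countable $\{L_i:i\in\bbn\}\subseteq\F\setminus A^*$ dense in $\F$ — thus $L_i\in\F$ and $L_i\cap A=\emptyset$ for all $i$ — and put $G=E\setminus\bigcup_iL_i$. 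Then $G$ is $\gd$ (the $L_i$ are compact) and $A\subseteq G$. Finally, for $L\in\K(G)$ we have $L\cap L_i=\emptyset$, i.e.\ $L_i\in\F\setminus L^*$, for every $i$; since $\F\setminus L^*$ is open (as $L^*$ is closed) and contains the dense set $\{L_i\}$, it is comeager in $\F$, so $L^*\cap\F$ is meager, i.e.\ $L\in I$. Thus $\K(G)\subseteq I$, which is property $(*)$.

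\emph{Forward direction.} Now assume $I$ is coanalytic, non-empty, and has $(*)$. Put $\mathcal G=\{G\subseteq E:G\text{ is }\gd\text{ and }\K(G)\subseteq I\}$. Property $(*)$ says precisely that every member of $I$ lies in some $G\in\mathcal G$ and that any countably many members of $I$ lie in one $G\in\mathcal G$, so $I=\bigcup_{G\in\mathcal G}\K(G)$. I aim to build a countable family $L_i\in\K(E)$ so that, with $\F:=\overline{\{L_i:i\in\bbn\}}$ (closed), for every $K\in\K(E)$: $K\in I$ iff $\{i:L_i\cap K=\emptyset\}$ indexes a dense subset of $\F$. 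This yields the theorem, because $\{L_i\}$ is dense in $\F$ and $\F\setminus K^*$ is open (as $K^*$ is closed), so that density is equivalent to $\F\setminus K^*$ being dense, i.e.\ to the closed set $K^*\cap\F$ being nowhere dense, hence meager, in $\F$. The $L_i$ should be finite stages of exhaustions of complements $E\setminus G$, $G\in\mathcal G$: if $K\subseteq G$ and $L\subseteq E\setminus G$ then $L$ misses $K$, which is how $K\in I$ will force the avoiding family to be dense; and if $K\notin I$ then $K$ meets $E\setminus G$ for \emph{every} $G\in\mathcal G$, which is how $K\notin I$ will trap a relatively open piece of $\F$ inside $K^*$. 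Concretely I would run a fusion along a countable base for $E$: at each finite stage, having finitely many approximate neighborhoods under construction, I amalgamate the relevant members of $I$ into one $G\in\mathcal G$ via $(*)$, use a Suslin (tree) representation of the analytic set $\K(E)\setminus I$ to control which compacta can fail to be in $I$, and then add new compacta $L_i$ exhausting complements of open neighborhoods of these $G$'s, chosen small in the Hausdorff metric so as to keep every avoiding subfamily dense and placed so their limits still witness $\K(E)\setminus I$. (Consistency with $I$ being an ideal is automatic: for $K,L\in I$, $\F\setminus K^*$ and $\F\setminus L^*$ are dense open, hence so is $(\F\setminus K^*)\cap(\F\setminus L^*)=\F\setminus(K\cup L)^*$.)

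The hard part is exactly this fusion. The crux is that $\mathcal G$ has no countable cofinal subfamily — already for the ideal of compact nowhere dense sets — so the $G$'s witnessing $K\in I$ cannot simply be enumerated; they must be spread densely through the tree of approximate neighborhoods so that $\{i:L_i\cap K=\emptyset\}$ comes out dense for every $K\in I$ simultaneously, while at the same time $\F$ must stay closed (each limit point of $\{L_i\}$ must be accounted for) and, for every $K\notin I$, some approximate neighborhood must be blocked so that $K^*\cap\F$ is not nowhere dense. Coanalyticity of $I$ is what keeps $\K(E)\setminus I$ under control throughout the recursion, and property $(*)$ is exactly what permits the amalgamation at each finite stage; balancing these three requirements in a single construction is the main obstacle.
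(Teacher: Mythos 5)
This statement is Solecki's theorem, which the paper quotes from \cite{SS1} without proof, so there is no in-paper argument to compare your attempt against; I can only assess it on its own terms. Your backward direction is correct and complete: each $K_n^*$ is closed, so $K_n^*\cap\F$ is closed and meager, hence nowhere dense, in the compact metrizable (hence Baire) space $\F$; taking a countable dense subset $\{L_i\}$ of the dense $\gd$ set $\bigcap_n(\F\setminus K_n^*)$ and setting $G=E\setminus\bigcup_i L_i$ gives a $\gd$ set containing $\bigcup_n K_n$ with $\K(G)\subseteq I$, since for $L\in\K(G)$ the open set $\F\setminus L^*$ contains the dense family $\{L_i\}$. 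That half is a clean, self-contained argument.

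The forward direction, however, is where essentially all the content of the theorem lives, and you have not proved it: you have written a plan. You correctly reduce the problem to producing a countable family $\{L_i\}$ whose closure $\F$ satisfies, for every $K$, ``$K\in I$ iff $\{i:L_i\cap K=\emptyset\}$ indexes a dense subset of $\F$,'' and you correctly identify the two competing demands (density of the avoiders for every $K\in I$ simultaneously, versus trapping a nonempty relatively open piece of $\F$ inside $K^*$ for every $K\notin I$) as well as the key obstruction that the family $\mathcal{G}$ of witnessing $\gd$ sets has no countable cofinal subfamily. But the fusion that is supposed to reconcile these demands --- how the $L_i$ are placed along the tree of approximations, how the Suslin representation of the analytic set $\K(E)\setminus I$ is actually used, why every limit point of $\{L_i\}$ (needed for $\F$ to be closed) behaves correctly, and above all why $K\notin I$ genuinely yields a basic open subset of $\F$ contained in $K^*$ --- is exactly the theorem, and none of it is carried out; you explicitly flag it as the ``main obstacle'' rather than resolving it. As it stands the forward implication is an announcement of strategy, not a proof; to complete it you would need to execute the recursion in full detail, or else cite \cite{SS1} as the paper does.
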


This representation is analogous to a result of Choquet \cite{Choquet} that establishes a correspondence between alternating capacities of order $\infty$ on $E$ and probability Borel measures on $\K(E)$.

Note that the set $\F$ in  Theorem~\ref{thm:sschar} is not unique. We hope to determine properties for $\F$ that make it a canonical representative, perhaps upto some notion of equivalence. One property of interest is that of being \emph{closed upwards}, i.e., $\forall A,B \in \K(E), \; B \supseteq A \in \F \Rightarrow B \in \F$. This property ensures that the map $K \mapsto K^* \cap \F$, a fundamental function in this context, is continuous.  In several examples of $\gd$ ideals with property $(*)$, the natural choice of the set $\F$ is in fact closed upwards. For example, let $\mu$ be an atomless finite probability measure on $E$ and let $I$ be the $\sigma$-ideal of compact $\mu$-null sets. Fix a basis of the topology on $E$ and let $s \in (0,1)$ be chosen so that it is not the measure of any basic set. Then the set  $\F = \{K \in \K(E) : \mu (K) \geq s \}$ works to characterize membership in the ideal. 

In the following result we show that as long as the ideal $I$ in Theorem~\ref{thm:sschar} contains only meager sets, we may always find an $\F$ representing it that is closed upwards. We use the following notation in the proof: if $A\subseteq E$ and $\delta > 0$, $A + \delta$ denotes the set $\bigcup_{x \in A}B(x,\delta).$ $Int(A)$ denotes the interior of $A$ in $E$. 

\begin{thm}\label{thm:upclosed}
For a closed set $\F \subseteq \K(E)$, the following are equivalent:
\begin{enumerate}
\item[(1)] $\forall K \in \K(E), \; K$ has nonempty interior $\Rightarrow K^* $ nonmeager in $\F$.
\item[(2)] $\exists \F' \subseteq \K(E)$, closed and closed upwards, such that  \[ \forall K \in \K(E), K^* \textrm{ nonmeager in }\F'\iff K^* \textrm{ nonmeager in }\F.\]
\end{enumerate}
\end{thm}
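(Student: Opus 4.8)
The plan is to prove the two implications separately. The implication $(2)\Rightarrow(1)$ is short, while $(1)\Rightarrow(2)$ — where one must actually exhibit a closed and closed‑upwards $\F'$ — carries the content.

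\emph{Proof of $(2)\Rightarrow(1)$.} Let $\F'$ be as in $(2)$ and suppose $K\in\K(E)$ has nonempty interior; by the stated equivalence it is enough to check that $K^*$ is nonmeager in $\F'$. Fix a nonempty open set $V\subseteq Int(K)$, a point $x\in V$, and (in the nondegenerate case $\F'\neq\emptyset$) some $A\in\F'$. Since $\F'$ is closed upwards, $A\cup\{x\}\in\F'$, and the Vietoris‑open set $\{L\in\K(E):L\cap V\neq\emptyset\}$ is a neighbourhood of $A\cup\{x\}$ whose trace on $\F'$ is contained in $K^*$, because $V\subseteq K$. Hence $K^*\cap\F'$ has nonempty interior in the Polish space $\F'$, and is therefore nonmeager.

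\emph{Proof of $(1)\Rightarrow(2)$.} First, the naive choice fails. The upward closure $\widehat\F=\{L\in\K(E):A\subseteq L\ \text{for some}\ A\in\F\}$ is closed (the relation $\{(A,L):A\subseteq L\}$ is closed in the compact space $\K(E)^2$) and closed upwards, but replacing $\F$ by $\widehat\F$ can destroy the equivalence. For instance take $E=[0,1]$ and $\F=\{\{0\},\{1\},E\}$, a three‑point discrete subspace of $\K(E)$; condition $(1)$ holds, because for $K\neq\emptyset$ one has $E\in K^*$ while $\{E\}$ is relatively open in $\F$. Yet $\widehat\F=\{L:0\in L\ \text{or}\ 1\in L\}$, and in $\widehat\F$ the set $\{\tfrac12\}^*$ is meager, whereas $\{\tfrac12\}^*$ is nonmeager in $\F$. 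So one should close up not $\F$ itself but a \emph{thickened} subfamily of $\F$: the idea is to enlarge members $A$ of $\F$ to compact sets $L\supseteq A$ in which every point of $L$ lies in a ball $\overline{B(x,\delta)}\subseteq L$ whose radius $\delta>0$ is bounded below in a fashion determined by $\F$ (this is where the notation $A+\delta$ and the interior operator enter), to let $\G$ be the closure of the family of all such $L$, and to set $\F'=\widehat\G$. Every member of $\G$ then has nonempty interior, so by $(1)$ each such set is ``large'' for $\F$; this is exactly what will prevent the family of $K$ with $K^*$ nonmeager from shrinking. By the observation above, $\F'=\widehat\G$ is closed and closed upwards.

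It then remains to verify, for every $K\in\K(E)$, that $K^*$ is nonmeager in $\F'$ if and only if $K^*$ is nonmeager in $\F$. In the forward direction, from a Vietoris‑basic box $W$ with $\emptyset\neq W\cap\F\subseteq K^*$ one builds a box $W'$ with $\emptyset\neq W'\cap\F'\subseteq K^*$: the prescribed ``room'' inside the members of $\G$ forces each of them to meet $K$, even though $W'$ need not be contained in $W$. In the reverse direction — recalling that $K^*$ is closed, so that ``meager'' here means ``nowhere dense'' — one shows that the members of $\F'$ disjoint from $K$ are dense in $\F'$, by excising from a given $L\in\F'$ the portion lying near $K$ and repairing it within $\G$, which is possible precisely because of the room built into $\G$ and the closedness of $\G$. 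The crux, and the main obstacle, is to calibrate the thickening so that it is simultaneously generous enough for the forcing step (so that $\{K:K^*\ \text{nonmeager in}\ \F'\}$ does not become smaller than $\{K:K^*\ \text{nonmeager in}\ \F\}$) and tight enough that $\G$ stays closed and the density step goes through (so that this family does not become larger); in particular one must ensure that the closure operation defining $\G$ does not reintroduce thin limit sets. Balancing these two requirements at once is what the $\delta$‑neighbourhood bookkeeping and hypothesis $(1)$ are designed to handle.
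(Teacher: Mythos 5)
Your proof of $(2)\Rightarrow(1)$ is correct and is essentially the paper's: for upward-closed $\F'$ and open $V\subseteq K$, the trace of $V^*$ on $\F'$ is open and (when $\F'\neq\emptyset$) nonempty, hence nonmeager. Your counterexample showing that the naive upward closure $\widehat{\F}$ can destroy the equivalence is also correct and is a genuinely useful observation. However, your argument for $(1)\Rightarrow(2)$ is not a proof but a program, and its two essential steps are left undone; moreover, the mechanism you propose for one of them cannot work as described.

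Concretely: (i) the family $\mathcal{G}$ is never defined --- ``a ball whose radius is bounded below in a fashion determined by $\F$'' specifies nothing, and everything afterwards hinges on this calibration, which you yourself defer as ``the crux and the main obstacle.'' (ii) More seriously, the forward direction cannot be driven by ``room.'' The only case with content is a $K$ with \emph{empty} interior but $K^*$ nonmeager in $\F$ (if $K$ has nonempty interior, then $K^*\cap\F'$ already contains the nonempty open set $Int(K)^*\cap\F'$ for any upward-closed $\F'$). For such a nowhere dense $K$, a compact set containing balls of radius $\delta$ is in no way forced to meet $K$. What would force $L\supseteq G\supseteq A$ to meet $K$ is knowing that the underlying $A\in\F$ lies in $K^*$; but a Vietoris box around $L$ retains only the ``$\subseteq V_0$'' information about $A$, while the ``hits $V_i$'' conditions --- exactly the ones witnessing that $K^*$ is nonmeager in $\F$ --- are absorbed by arbitrary supersets. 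Overcoming this is the entire point of the paper's construction: it fixes a basis $\{\V_n\}$ of $\F$ and works with the test sets $\K_n=\ol{\V_n}$ (so that $K\notin I$ iff $\K_n\subseteq K^*$ for some $n$), confines each $\K_n$ to a region $W_n$, attaches pairwise disjoint ``tag'' open sets $U_{n,j}$ living in a separate convergent system of neighborhoods, and defines the upward-closed pieces $\el_{n,j}$ by a \emph{partial containment} condition $F\cap A_{n,j}\subseteq L$ (with $F\in\K_n$) together with the tag-hitting condition; for $K\notin I$ one then decomposes $K$ along this geometry and chooses an open $W$ so that every $L\in\el\cap\K(W)$ is forced into a single $\el_{m,j}$ and hence forced to contain a set meeting $K$. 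Nothing in your sketch plays the role of the tags or of the partial containment condition, so the implication $(1)\Rightarrow(2)$ remains unproved.
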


\begin{proof}

It is clear that (2) $\Rightarrow$ (1), simply because, if $\F'\subseteq \K(E)$ is closed upwards and $U \subseteq E$ is open, $\F' \cap U^*$ is open and nonempty. To prove the other direction, let $I = \{K \in \K(E): K^* $ is meager in $ \F\}$. $I$ is an ideal with property $(*)$. Let $\{\V_n\}$ be a basis for the relative topology on $\F$, and let $\K_n = \ol{\V_n}$. We now have:
\[
K \in I \Rightarrow \forall n \; K^* \textrm{ meager in } \K_n \textrm{ and }K \notin I \Rightarrow \exists n \; \K_n \subseteq K^* 
\]

Fix a sequence $\{x_i\}$ and a point $x \in E$ such that the $x_i$ are all distinct, $x_i \to x$, $\{x\} \in I$ and each $\{x_i\} \in I$. (This is easy -- we can just pick the $x_i$ from some fixed infinite set in $I$.) Let  $U'_i$ be open such that $x_i \in U'_i$, $\ol{U'_i} \to \{x\}$ and the $\ol{U'_i}$ are pairwise disjoint. Now we pick a subsequence $U'_{n_i}$ and define sets $(U_i, F_i, W_i)$, $i \in \mathbb{N}$, satisfying:
\begin{itemize}
\item $U_i, W_i$ are open,
\item $U_i \subseteq U'_{n_i}$, so the sets $\ol{U_i}$ are pairwise disjoint,
\item $F_i \in \K_i$,
\item $F_i \subseteq W_i$,
\item If $j \leq i$ then $ \ol{W_j} \cap \ol{U_i} = \emptyset$.
\end{itemize}
Since $\{x, x_0\} \in I, \; \K_0 \nsubseteq \{x, x_0\}^*$. Let $F_0 \in \K_0$ such that $x \notin F_0$, $x_0 \notin F_0$. Let $W_0$ be some open set such that $F_0 \subseteq W_0$ and $x, x_0 \notin \ol{W_0}.$ Let $U_0 \subseteq U'_0$ be an open set containing $x_0$ such that $\ol{U_0} \cap \ol{W_0} = \emptyset$. 

Pick $n_1 > 0$ such that $\forall m \ge n_1, \; \ol{W_0} \cap \ol{U'_m} = \emptyset.$  

To define $(U_i, F_i, W_i)$, consider $\K_i$ and $U'_{n_i}$. Pick $F_i \in \K_i \setminus \{x, x_{n_i}\}^*.$ Let $W_i \supseteq F_i$ be open such that $x, x_{n_i} \notin \ol{W_i}.$ Let $U_i \subseteq U'_{n_i}$ be an open set containing $x_{n_i}$ such that $\ol{U_i} \cap \ol{W_i} = \emptyset$. Pick  $ n_{i+1} > n_i $ such that $\forall m \ge n_{i+1}, \; \ol{W_i} \cap \ol{U'_m} = \emptyset.$ 

Note that:
\[
K \in I \Rightarrow \forall n \; K^* \textrm{ meager in } \K_n \cap \K(W_n) \textrm{ and }K \notin I \Rightarrow \exists n \; \K_n \cap \K(W_n) \subseteq K^* 
\]
So, by replacing the sets $\K_n$ by the sets $\K_n \cap \K(W_n)$, we may simply assume that $\K_n \subseteq \K(W_n)$.\\

Now define $\el \subseteq \K(E)$ as follows. Fix $n \in \mathbb{N}.$ For $j \in \mathbb{N}$, define closed sets

\begin{displaymath}
A_{n,j}= \left\{ \begin{array}{ll}
\ol{U_j}& \textrm{if } j<n \\
E\setminus \bigcup_{i<n}(U_i + 1/j) & \textrm{if } j \ge n
\end{array} \right.
\end{displaymath}

Let $U_{n,j}, \; j \in \mathbb{N},$ be nonempty disjoint open subsets of $U_n$. (This is possible because, since $\{x_n\}$ is not open, it must lie in the  perfect part of $E$.)\\

Define sets $\el_{n,j}$ as follows: for  $L \in \K(E)$,
\[L \in \el_{n,j} \iff \exists F \in \K_n  \; \textrm{ such that } F \cap A_{n,j} \subseteq L \textrm{ \emph{and} } L \textrm{ intersects } U_{n,j}\]

Now let $\el = \bigcup_{n,j} \el_{n,j}.$ Since each $\el_{n,j}$ is closed upwards, so is $\el$. \\

Claim: $K \in I \iff K^* $ is meager in $ \el.$\\

Let $K \in I$. We want to show that $\el \setminus K^*$ is dense in $\el.$ Let $L_1 \in \el_{n,j}$, i.e., $\exists F \in \K_n  \; \textrm{ such that } F \cap A_{n,j} \subseteq L \textrm{ and } L \textrm{ intersects } U_{n,j}.$ Let $L \supseteq L_1$ be close to $L_1$, satisfying $L_1 \subseteq Int(L)$ and $\ol{Int(L)} = L$. Note that $L$ is nonmeager in $U_{n,j}.$\\

Consider the set $\D = \K_n \cap \{ F: F \cap A_{n,j} \subseteq Int(L) \}.$ $\D$ is a nonempty open subset of $\K_n$. (Openness follows from this easily checked fact about $\K(E)$: if $A\subseteq E$ is closed and $U\subseteq E$ is open, then $\{F \in \K(E): F \cap A \subseteq U\}$ is open.) Since $K \in I$, $K^*$ is meager in $\K_n$. So $\D \nsubseteq K^*$. Let $F_1 \in \D \setminus K^*.$ Now we can remove from $L$ an open $U \supseteq K$ where $U$ is chosen small enough so that $U \cap F_1 = \emptyset$ and $L \setminus U$ is still nonmeager in $U_{n,j}$. Now, $L \setminus U$ is in $\el_n \setminus K^*$ and is close to $L$. \\

Conversely, suppose $K \notin I.$ We want to show that $\exists$ open $\U \subseteq \K(E)$ such that $\emptyset \neq \U \cap \el \subseteq K^*.$\\

Let $C = \bigcup_n \ol{U_n} \cup \{x\}$, a closed set. Write $K \setminus C = \bigcup_j K_j$, where $K_j = K \setminus (C + 1/j)$, which is closed.  Now, \[K = (K \cap \{x\}) \; \cup \; \bigcup_n (K \cap \ol{U_n}) \; \cup \; \bigcup_j K_j \]

Since $I$ is a $\sigma$-ideal and $\{x\} \in I$, we have two possible cases: either some $K \cap \ol{U_n} \notin I$ or some $K_j \notin I.$\\

Case 1: $\exists n \; K \cap \ol{U_n} \notin I$. Fix $m$ such that $\K_m \subseteq (K \cap \ol{U_n})^*$.  

If $m \leq n$ then $\ol{U_n} \cap \ol{W_m} = \emptyset.$ So $m>n$. This means that $\ol{U_n}$ is one of the sets $A_{m,j}. $ Let $V \supseteq \ol{U_n}$ be open such that $V \cap \ol{U_i} = \emptyset \; \forall i \neq n$ and $V \cap \ol{W_n} = \emptyset$. Let $W = V \cup U_{m,j}$. \\

Claim: $\emptyset \neq \el \cap \K(W) \subseteq K^*.$\\

It is clear that $\el_{m,j} \cap \K(W)  \neq \emptyset.$ Let $L \in \K(W) \cap \el$. For any $i \notin \{n,m\}$, $L \cap U_i = \emptyset$. Also, $L \cap W_n = \emptyset$ and $L \cap U_{m,j'} = \emptyset \; \forall j' \neq j$. So the only possibility is that $L \in \el_{m,j},$ i.e., $\exists F \in \K_m \textrm{ such that } F\cap A_{m,j}= F\cap \ol{U_n} \subseteq L$. Since $F \cap \ol{U_n} \cap K \neq \emptyset$, we have $L \cap K \neq \emptyset.$\\

Case 2: $ \exists j \; K_j \notin I.$ Fix $m$ such that $\K_m \subseteq K_j^*$. Fix $\delta > 0$ such that $K_j \cap \bigcup_{i<m} \ol{(U_i + \delta)} = \emptyset$ and let $k \in \mathbb{N}$ such that $k \ge m$ and $1/k < \delta$. Let $ W = (W_m \setminus \bigcup_{i<m} \ol{U_i}) \; \cup \; U_{m,k}.$\\

Claim:  $\emptyset \neq \el \cap \K(W) \subseteq \K^*.$\\ 

It is clear that $\K(W) \cap \el_{m,k} \neq \emptyset$ (To get something in this set, we can simply take any $F \in \K_m$ and join some piece of $U_{m,k}$ to $F \cap A_{m,k}.$) So $K(W) \cap \el \neq \emptyset.$

Now, let $L \in K(W) \cap \el.$ As before, the only possibility is that $L \in \el_{m,k}$, i.e., $\exists F \in \K_m \textrm{ such that } F \cap A_{m,k} = F \setminus \bigcup_{i<m} {(U_i + 1/k)} \subseteq L$. Since $F \in \K_m, \; F \cap K_j \neq \emptyset.$ Let $x \in F \cap K_j$. Since $1/k < \delta,$ we have $x \in L$. So $L \in K_j^* \subseteq K^*$.\\

So in both cases, $K^*$ is nonmeager in $\el.$ Finally, set $\F' = \ol{\el}.$
 \end{proof}

\begin{cor}
Let $I \subseteq \K(E)$ be a coanalytic ideal with property $(*)$ containing no non-meager sets. Then there exists a closed set $\F \subseteq \K(E)$ such that $\F$ is closed upwards and for any $K \in \K(E)$, \[K \in I \iff K^* \cap \F \textrm{ is meager in } \F.\]
\end{cor}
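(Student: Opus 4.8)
The plan is to derive the corollary directly from Theorem~\ref{thm:sschar} and Theorem~\ref{thm:upclosed}. First I would invoke Theorem~\ref{thm:sschar}: since $I$ is coanalytic, non-empty, and has property $(*)$, there is a closed set $\F \subseteq \K(E)$ with $K \in I \iff K^* \cap \F$ meager in $\F$ for all $K \in \K(E)$. The goal is to replace $\F$ by a closed, upward-closed set $\F'$ with the same characterizing property, and the obvious tool is Theorem~\ref{thm:upclosed}, whose conclusion (2) produces exactly such an $\F'$ preserving the predicate ``$K^*$ nonmeager in $\cdot$'' (equivalently ``$K^* \cap \cdot$ meager in $\cdot$'', since $K^*$ is clopen in $\K(E)$, so $K^* \cap \F$ is meager in $\F$ precisely when $K^*$ is meager in $\F$).

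So the real content of the proof is to verify the hypothesis (1) of Theorem~\ref{thm:upclosed} for this $\F$: every $K \in \K(E)$ with nonempty interior has $K^*$ nonmeager in $\F$. Suppose toward a contradiction that some $K$ has nonempty interior but $K^*$ is meager in $\F$; then by the characterization $K \in I$. But $I$ contains no non-meager subsets of $E$, and a compact set with nonempty interior in the Polish space $E$ is non-meager in $E$ — here I would note $E$ is itself Polish (indeed compact Polish), hence Baire, so any set with nonempty interior is non-meager in $E$. This contradicts $K \in I$. Hence (1) holds.

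Applying Theorem~\ref{thm:upclosed}, we obtain a closed, upward-closed $\F' \subseteq \K(E)$ such that $K^*$ is nonmeager in $\F'$ iff $K^*$ is nonmeager in $\F$, for every $K \in \K(E)$. Combining with the original equivalence, $K \in I \iff K^*$ meager in $\F \iff K^*$ meager in $\F' \iff K^* \cap \F'$ meager in $\F'$, the last step again using that $K^*$ is clopen. Taking $\F'$ as the desired set completes the proof.

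I do not anticipate a serious obstacle here, since the corollary is essentially a packaging of the two theorems; the only point needing care is the translation between ``$K^*$ nonmeager in $\F$'' (the formulation in Theorem~\ref{thm:upclosed}) and ``$K^* \cap \F$ meager in $\F$'' (the formulation in Theorem~\ref{thm:sschar} and in the corollary), which is immediate because $K^*$ is a clopen subset of $\K(E)$, together with the elementary observation that a compact subset of a Baire space with nonempty interior is non-meager.
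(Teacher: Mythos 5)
Your proposal is correct and is exactly the argument the paper intends (the paper's proof is just the one-line remark that the corollary follows from Theorems~\ref{thm:sschar} and~\ref{thm:upclosed}); the substantive step you supply --- that a compact set with nonempty interior is non-meager in the Baire space $E$, hence not in $I$, so hypothesis (1) of Theorem~\ref{thm:upclosed} holds --- is the right one. One small inaccuracy: $K^*$ is closed but not in general open in $\K(E)$, so it is not clopen; this is harmless, since ``$K^*$ meager in $\F$'' and ``$K^* \cap \F$ meager in $\F$'' both just mean the trace of $K^*$ on $\F$ is meager in the subspace $\F$, with no openness needed.
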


\begin{proof}
An immediate consequence of Theorem~\ref{thm:sschar} and Theorem~\ref{thm:upclosed}.
\end{proof}

\end{document}